\newcommand{\N}{\mathbb{N}}
\newcommand{\R}{\mathbb{R}}
\newcommand{\E}{\mathbb{E}}
\newcommand{\rpm}{\raisebox{.2ex}{$\scriptstyle\pm$}}
\newcommand{\vect}[1]{\boldsymbol{#1}}
\newcommand{\Scal}{\mathcal{S}}
\newcommand{\eb}{\vect{e}}
\newcommand{\xb}{\vect{x}}
\newcommand{\xt}{\tilde{x}}
\newcommand{\Ab}{\vect{A}}
\newcommand{\Bb}{\vect{B}}
\newcommand{\Bbbar}{\bar{\vect{B}}}
\newcommand{\Bbh}{\hat{\vect{B}}}
\newcommand{\Bbt}{\tilde{\vect{B}}}
\newcommand{\Bbti}{\tilde{\vect{B}}_{\mathrm{iso}}}
\newcommand{\Db}{\vect{D}}
\newcommand{\Et}{\tilde{E}}
\newcommand{\Gt}{\tilde{G}}
\newcommand{\Lb}{\vect{L}}
\newcommand{\Lbt}{\tilde{\vect{L}}}
\newcommand{\Sb}{\vect{S}}
\newcommand{\wt}{\tilde{w}}
\newcommand{\Ib}{\vect{I}}
\newcommand{\Pib}{\bold{\Pi}}
\newcommand{\chit}{\tilde{\chi}}
\newcommand{\crmm}{CR\mathrm{-MM}}
\definecolor{LightBlue}{rgb}{0, 0.7, .7}
\definecolor{Green1}{rgb}{0.0, 0.5, 0.0}
\definecolor{green}{rgb}{0.0, 0.42, 0.24}
\definecolor{byzantine}{rgb}{0.74, 0.2, 0.64}
\newtheorem{Thm}{Theorem}
\newtheorem{Prop}{Proposition}
\newtheorem{Def}{Definition}
\newcommand{\ind}{\text{\color{white}.$\quad$}}
\DeclareMathOperator*{\argmin}{arg\,min}
\begin{document}

\title{{\huge Graph Sparsification by Approximate Matrix Multiplication}}

\author{Neophytos Charalambides, Alfred O. Hero III\\
EECS Department, University of Michigan, Ann Arbor, MI 48109\\
  Email: \texttt{\textbf{neochara@umich.edu}}, \texttt{\textbf{hero@umich.edu}}
\thanks{We thank Greg Bodwin for helpful discussions and suggestions. This work was partially supported by the US Department of Energy grant DE-NA0003921, and the Army Research Office grant W911NF1910269.}}

\maketitle


\begin{abstract}
Graphs arising in statistical problems, signal processing, large networks, combinatorial optimization, and data analysis are often dense, which causes both computational and storage bottlenecks. One way of \textit{sparsifying} a \textit{weighted} graph, while sharing the same vertices as the original graph but reducing the number of edges, is through \textit{spectral sparsification}. We study this problem through the perspective of RandNLA. Specifically, we utilize randomized matrix multiplication to give a clean and simple analysis of how sampling according to edge weights gives a spectral approximation to graph Laplacians, without requiring spectral information. Through the $\crmm$ algorithm, we attain a simple and computationally efficient sparsifier whose resulting Laplacian estimate is unbiased and of minimum variance. Furthermore, we define a new notion of \textit{additive spectral sparsifiers}, which has not been considered in the literature.
\end{abstract}

\begin{IEEEkeywords}
Large graphs, Laplacians, spectral sparsification, numerical linear algebra, graph approximations, random sampling.
\end{IEEEkeywords}

\section{Introduction and Related Work}
\label{intro}

Large graphs, networks and their associated Laplacian are prevalent in many applications and domains of modern signal processing, statistics and engineering, \textit{e.g.} spectral clustering \cite{Von07}, community detection \cite{DCT20} and graph learning \cite{JYKS21}. Their size makes them hard to store and process, which is why it is preferred to instead work with a good approximation or sketch of the graph. Algorithms for approximating large graphs have been developed through the study of \textit{spectral graph theory}, which deals with the eigenvalues and eigenvectors of matrices naturally associated with graphs. A standard approach is by sampling edges or vertices of these graphs, with judiciously chosen sampling distributions.

Our main contribution, is bridging a connection between \textit{randomized numerical linear algebra} (RandNLA) and approximate \textit{matrix multiplication} (MM), with Laplacian \textit{spectral sparsifiers} of \textit{weighted} graphs $G=(V,E,w)$. The resulting algorithm is intuitive and simple, and has been considered in independent works. Our analysis though is more straightforward and shorter than other analyses considering the same and similar sparsifiers, \textit{e.g.} \cite{SYT16}. Lastly, we introduce an alternative measure for spectral sparsifiers, which captures additive approximation errors.

Spectral sparsifiers are of importance, as they preserve eigenvector centrality \cite{ST04}, cuts in a graph \cite{BK96}, flows in networks modeled by graphs \cite{GT88}, and maintain the structure of the original graph. By viewing the Laplacian $\Lb$ of $G$ as the outer product of its boundary matrix $\Bb\in\R_{\geqslant0}^{E\times V}$, we use $CR$ matrix multiplication ($\crmm$) to approximate the Laplacian $\Lbt\approx\Lb$. This turns out be equivalent to sampling and re-weighting edges from $G$, with sampling probabilities proportional to the edges' weights. The resulting Laplacian $\Lbt$ is an unbiased estimate of minimum variance, and represents the sketched graph $\Gt=(V,\Et,\wt)$. Unlike most other spectral sparsifiers whose guarantees depend entirely on the number of vertices $n$, ours depends on the edge weights $w$.

What we present also draws connections between sampling according to Frobenius norm of vectors, and \textit{leverage scores}; which has been extensively studied in the context of linear systems and $\ell_2$-subspace embeddings \cite{DMMS11,DMMW12}. Sparsifying Laplacians through sampling is the appropriate intermediate application, between MM and subspace embeddings.

\subsection{Related Work}

The main idea behind the sparsifier we study is simple and intuitive. By using a primitive which has extensively been studied; approximate multiplication, as a surrogate to analysing the proposed spectral sparsifier, we present a simple analysis which yields more concise statements regarding the resulting sparsifier, compared to related work \cite{SYT16}; which considers Gaussian smoothing. Our guarantees differ from previous works, and we draw connections to RandNLA.

Along similar lines, connections between \textit{effective resistances} and leverage scores have been previously established \cite{DM10,SS11}. Spectral sparsification has also been used in linear algebra to obtain deterministic and randomized algorithms for low-rank matrix approximations \cite{BDM14,Cam14}. In this work, we obtain results in the converse direction.

The state-of-the-art approach to spectral sparsification is to sample edges according to effective resistances \cite{Spi10,SS11}. This approach leads to a nearly-linear time algorithm that produces high-quality sparsifiers of weighted graphs. A drawback of this approach is the computational complexity of determining the resistances, which requires either a spectral decomposition of $\Lb$, or directly computing $\Lb^{\dagger}$. In what we propose, the sampling distribution is already known through $w$, and the sampling can be done pass-efficiently only inquiring an additional $O(1)$ additional storage space \cite[Algorithm 1]{Mah16}. This makes our method algorithmically superior to sampling according to effective resistances, as computing them requires $O(|E|\cdot|V|^2)$ operations.

Furthermore, through leverage scores, sampling according to the effective resistances relates to the notion of an \textit{$\ell_2$-subspace embedding}. As contrasted to the objective of \cite{SS11}, we use approximate multiplication; to obtain minimum variance unbiased estimators.

\subsection{Preliminaries}
\label{prelim_subsec}

Recall that the \textit{\textbf{Laplacian}} of $G=(V,E,w)$ a weighted undirected graph with $|V|=n$, $|E|=m$, weights $w_{i,j}$ for each edge $(i,j)\in E$ is
\begin{equation}
\label{lapl_eq}
  \Lb_{ij} = \begin{cases} \sum_{(i,\ell)\in E}w_{i,j} \ \ \text{ if } i=j \\ -w_{i,j} \qquad \qquad \text{if } i\neq j \end{cases} \ \ \text{ for }  i,j\in V.
\end{equation}
Equivalently, it is expressed as $\Lb=\Db-\Ab$, for $\Db,\Ab\in\N_0^{n\times n}$ respectively the degree and adjacency matrices of $G$. This can also be expressed as the Gram matrix of the \textbf{\textit{boundary matrix}}\footnote{The transpose of the boundary matrix of $G$, is also known as the \textit{incidence matrix} of $G$.} $\Bb\in\R^{E\times V}$. Once we determine an arbitrary positive orientation $(i,j)$ of the edges in $E$, the boundary matrix associated with the orientation is defined as
\begin{equation*}
\label{bound_eq}
  \Bb_{(i,j),v} = \begin{cases}-\sqrt{w_{i,j}} \ \ \ \text{ if } v=i \\ \sqrt{w_{i,j}} \ \ \ \ \ \ \text{ if } v=j\\ 0 \qquad \qquad \text{o.w.} \end{cases} \ \ \text{ for }  (i,j)\in E \text{ and } v\in V.
\end{equation*}
For an edge $e=(u,v)$, the orientation is represented in the \textbf{\textit{incidence vector}} $\chi_e=\eb_u-\eb_v$; for $\eb_i\in\R^V$ the standard basis vectors. We define the \textbf{\textit{weighted incidence vector}} as $\chit_e=\sqrt{w_e}\cdot\chi_e$. The Laplacian of $G$ is then
\begin{equation}
\label{lapl_bound}  
  \Lb = \Bb^T\Bb = \sum_{e\in E}\chit_e\chit_e^T = \sum_{e\in E}w_e\cdot\chi_e\chi_e^T \in\R^{V\times V}.
\end{equation}

\subsection{Approximate Matrix Multiplication}
\label{CR_subsec}

Consider the two matrices $A\in\R^{L\times N}$ and $B\in\R^{N\times M}$, for which we want to approximate the product $AB$. It is known that the product may be approximated by sampling with replacement (s.w.r.) columns of $A$ and rows of $B$, where the row-column sampling probabilities are proportional to their Euclidean norms. That is, we sample with replacement $r$ pairs $(A^{(i)},B_{(i)})$ for $i\in\N_N\coloneqq\{1,\cdots,N\}$ and $r<N$ ($A^{(i)}$=$i^{th}$ column of $A$, and $B_{(i)}$=$i^{th}$ row of $B$), with probability
\begin{equation}
\label{sampl_pro}
  p_i=\frac{\|A^{(i)}\|_2\cdot\|B_{(i)}\|_2}{\sum_{l=1}^N\|A^{(l)}\|_2\cdot\|B_{(l)}\|_2}
\end{equation}
and sum a rescaling of the samples' outer-products:
\begin{equation}
\label{CR_eq}
  AB \approx \frac{1}{r}\cdot \left(\sum_{j\in\Scal}\frac{1}{p_j}A^{(j)}B_{(j)}\right) = \sum_{j\in\Scal}\frac{A^{(j)}}{\sqrt{rp_j}}\cdot\frac{B_{(j)}}{\sqrt{rp_j}} \eqqcolon Y
\end{equation}
where $\Scal$ is the multiset consisting of the indices (possibly repeated) of the sampled pairs, hence $|\Scal|=r$. We denote the corresponding ``compressed versions'' of the input matrices by $C\in\R^{L\times r}$ and $R\in\R^{r\times M}$ respectively. This approximation satisfies $\|AB-CR\|_F=O(\|A\|_F\|B\|_F/\sqrt{r})$. Further details on this algorithm may be found in \cite{DK01,DKM06a,DKM06b,Woo14,Mah16}. We have the following known results for the $\crmm$ algorithm.

\begin{Thm}[Section 3.2 \cite{Mah16}]
\label{thm_CR}
The estimator $Y=CR$ from \eqref{CR_eq} is unbiased, while the sampling probabilities $\{p_i\}_{i=1}^N$ minimize the variance, \textit{i.e.}
\begin{equation}
\label{eq_min_var_CR}
  \{p_i\}_{i=1}^N = \argmin_{\substack{\sum_{i=1}^Np_i=1}} \Big\{\mathrm{Var}(Y)=\E\left[\|AB-CR\|_F^2\right]\Big\}
\end{equation}
and it is an $\epsilon$-multiplicative error approximation of the matrix product, with high probability. Specifically, for $\delta\geqslant0$ and $r\geqslant \frac{1}{\delta^2\epsilon^2}$ the number of sampling trials which take place
\begin{equation}
\label{CR_mult_error}
  \Pr\big[\|AB-CR\|_F \leqslant\epsilon\cdot\|A\|_F\|B\|_F\big]\geqslant1-\delta
\end{equation}
for any $\epsilon>0$.
\end{Thm}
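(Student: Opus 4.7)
The plan is to split the theorem into its three assertions and handle them in the natural order: unbiasedness, variance-optimality of the probabilities in \eqref{sampl_pro}, and the high-probability bound \eqref{CR_mult_error}. Each of the $r$ i.i.d.\ samples $j\in\Scal$ contributes a rank-one matrix $X_j := \tfrac{1}{r p_j}A^{(j)}B_{(j)}$, and essentially everything reduces to manipulating first and second moments of $X_j$ together with the elementary identity $\|A^{(j)}B_{(j)}\|_F^2=\|A^{(j)}\|_2^2\,\|B_{(j)}\|_2^2$ for a rank-one outer product.

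For unbiasedness, I would compute
\[
\E[X_j]=\sum_{i=1}^N p_i \cdot \frac{1}{rp_i}A^{(i)}B_{(i)}=\frac{1}{r}\sum_{i=1}^N A^{(i)}B_{(i)}=\frac{1}{r}AB,
\]
and sum over the $r$ i.i.d.\ draws to get $\E[Y]=AB$. This step needs no extra assumptions beyond $p_i>0$ whenever $\|A^{(i)}\|_2\|B_{(i)}\|_2>0$.

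For the variance, I would use entry-wise independence across trials to write $\E[\|Y-AB\|_F^2]=\sum_{k,\ell}\mathrm{Var}(Y_{k\ell})=r\sum_{k,\ell}\mathrm{Var}((X_1)_{k\ell})=r\,\E[\|X_1\|_F^2]-\tfrac{1}{r}\|AB\|_F^2$, and then evaluate
\[
\E[\|X_1\|_F^2]=\sum_{i=1}^N p_i\cdot\frac{\|A^{(i)}\|_2^2\|B_{(i)}\|_2^2}{r^2 p_i^2}=\frac{1}{r^2}\sum_{i=1}^N\frac{\|A^{(i)}\|_2^2\|B_{(i)}\|_2^2}{p_i}.
\]
Since only the last sum depends on $\{p_i\}$, minimizing $\mathrm{Var}(Y)$ subject to $\sum_i p_i=1$ reduces to minimizing $\sum_i \|A^{(i)}\|_2^2\|B_{(i)}\|_2^2/p_i$. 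This is a textbook Cauchy--Schwarz (or Lagrange multiplier) problem whose minimizer is exactly \eqref{sampl_pro}; at the optimum the sum collapses to $\bigl(\sum_i\|A^{(i)}\|_2\|B_{(i)}\|_2\bigr)^2\leqslant\|A\|_F^2\|B\|_F^2$, where the last inequality is another application of Cauchy--Schwarz.

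For the concentration statement I would combine the variance bound with Jensen's and Markov's inequalities. Plugging the optimal probabilities into the formula above yields $\E[\|AB-CR\|_F^2]\leqslant\frac{1}{r}\|A\|_F^2\|B\|_F^2$, hence by Jensen's inequality $\E[\|AB-CR\|_F]\leqslant \tfrac{1}{\sqrt{r}}\|A\|_F\|B\|_F$. Markov's inequality then gives
\[
\Pr\bigl[\|AB-CR\|_F\geqslant\epsilon\|A\|_F\|B\|_F\bigr]\leqslant\frac{1}{\sqrt{r}\,\epsilon},
\]
which is at most $\delta$ precisely when $r\geqslant 1/(\delta^2\epsilon^2)$, matching the hypothesis. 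The only real subtlety --- and the one place to be careful --- is this last step: applying Markov to $\|AB-CR\|_F^2$ directly would produce the sharper threshold $r\geqslant 1/(\delta\epsilon^2)$ but with $\delta$ in place of $\sqrt{\delta}$, so one must use Jensen first to land on the bound as stated; sharper concentration (e.g.\ matrix Chernoff) is not needed here.
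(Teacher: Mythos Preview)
The paper does not supply its own proof of this theorem; it is quoted as a known result from \cite[Section~3.2]{Mah16} and used as a black box in the proofs of Propositions~\ref{prop_add_ss} and~\ref{prop_mult_ss}. Your argument is correct and is precisely the standard one from that reference: compute $\E[X_j]$ entrywise for unbiasedness, express $\E\|AB-CR\|_F^2$ as $\tfrac{1}{r}\sum_i \|A^{(i)}\|_2^2\|B_{(i)}\|_2^2/p_i-\tfrac{1}{r}\|AB\|_F^2$, minimize via Cauchy--Schwarz (or Lagrange multipliers) to recover \eqref{sampl_pro}, and finish with Markov.

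One remark on the last step. The usual presentation in \cite{Mah16,DKM06a} applies Markov to the \emph{squared} error, yielding the sharper sample requirement $r\geqslant 1/(\delta\epsilon^2)$; the $\delta^2$ in the statement here is looser than necessary. Your choice to pass through Jensen and apply Markov to the first moment is what reproduces the $\delta^2$ exactly as written, and you are right to flag that this is the only place where one must be deliberate to match the stated constants rather than improve on them.
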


\begin{Thm}[Theorem 8\cite{Mah16}]
\label{sp_bd_thm}
Let $A\in\R^{L\times N}$ with $\sigma_{\max}(A)=\|A\|_2\leqslant1$, and approximate the product $Y\approx AA^T$ using $\crmm$. Let $\epsilon\in(0,1)$ be an accuracy parameter, and assume that $\|A\|_F^2\geqslant1/24$. If
\begin{equation*}
\label{samples_AAt}
  r\geqslant\frac{96\|A\|_F^2}{\epsilon^2}\ln\left(\frac{96\|A\|_F^2}{\epsilon^2\sqrt{\delta}}\right)\geqslant\frac{4}{\epsilon^2}\ln\left(\frac{4}{\epsilon^2\sqrt{\delta}}\right)
\end{equation*}
for $r\leqslant N$, then
\begin{equation}
\label{bd_AAt}
  \Pr\left[\|AA^T-Y\|_2\leqslant\epsilon\right] \geqslant 1-\delta\ .
\end{equation}
\end{Thm}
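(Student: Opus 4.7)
The plan is to reduce to standard matrix concentration applied to the natural decomposition of $Y$ as a sum of i.i.d.\ rank-one random matrices. Writing $Y=\sum_{t=1}^{r}X_t$ with $X_t=\tfrac{1}{rp_{s_t}}A^{(s_t)}(A^{(s_t)})^T$ and indices $s_t\in\N_N$ drawn i.i.d.\ from $\{p_i\}$, the symmetric case $B=A^T$ in \eqref{sampl_pro} specializes the $\crmm$ distribution to $p_i=\|A^{(i)}\|_2^2/\|A\|_F^2$. A direct calculation then gives $\E[X_t]=\tfrac{1}{r}AA^T$ and hence $\E[Y]=AA^T$, so Theorem \ref{thm_CR} is recovered in expectation; however, the Frobenius bound \eqref{CR_mult_error} is strictly weaker than the operator-norm bound \eqref{bd_AAt} we seek, so a sharper tool is needed.

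That sharper tool is matrix Bernstein. The two quantities feeding into it become strikingly clean under the $\crmm$ choice of $p_i$. For the uniform bound,
\begin{equation*}
  \|X_t\|_2 \;=\; \frac{\|A^{(s_t)}\|_2^2}{r\,p_{s_t}} \;=\; \frac{\|A\|_F^2}{r}\qquad\text{almost surely,}
\end{equation*}
independently of the random index $s_t$. For the matrix variance, the calculation $\E[X_t^2]=\sum_{i=1}^N p_i\cdot\tfrac{\|A^{(i)}\|_2^2}{r^2p_i^2}A^{(i)}(A^{(i)})^T=\tfrac{\|A\|_F^2}{r^2}AA^T$ yields $\big\|\sum_{t=1}^r\E[X_t^2]\big\|_2\leqslant\tfrac{\|A\|_F^2\|A\|_2^2}{r}\leqslant\tfrac{\|A\|_F^2}{r}$, using $\|A\|_2\leqslant 1$; passing to the centered variables $X_t-\E[X_t]$ only tightens these, since $\E[X_t]$ is PSD.

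Plugging the two estimates into matrix Bernstein produces a tail bound of the form
\begin{equation*}
  \Pr\!\big[\|Y-AA^T\|_2\geqslant\epsilon\big]\;\leqslant\; 2N\exp\!\left(\frac{-r\epsilon^2/2}{\|A\|_F^2+\|A\|_F^2\epsilon/3}\right),
\end{equation*}
and inverting at confidence $1-\delta$ yields the required asymptotics $r=\Theta\!\left(\|A\|_F^2\epsilon^{-2}\log(N/\delta)\right)$. The conceptual work finishes at this point; the main obstacle is purely quantitative, namely pinning down the explicit constant $96$ and the slightly unusual $\sqrt{\delta}$ that appears inside the logarithm in the stated lower bound on $r$. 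The $\sqrt{\delta}$ is the fingerprint of a moment-based route rather than a direct tail bound: a noncommutative Khintchine inequality controlling $\E\|Y-AA^T\|_2$, followed by Markov's inequality applied to the \emph{squared} norm. The hypothesis $\|A\|_F^2\geqslant 1/24$ then plays the auxiliary role of both ensuring that the leading expectation bound dominates its residuals and that the first lower bound on $r$ in the statement dominates the second, since $96/24=4$.
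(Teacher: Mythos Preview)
The paper does not prove this theorem: it is quoted verbatim from \cite{Mah16} (as the bracket ``Theorem~8\cite{Mah16}'' indicates), and the only remark the paper adds is that its proof ``relies on a Chernoff bound on sums of Hermitian matrices \cite{Oli10}.'' So there is no in-paper argument to compare against.

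That said, your sketch is essentially on target, and you correctly diagnose the one genuine subtlety. Matrix Bernstein with the bounds $\|X_t\|_2=\|A\|_F^2/r$ and $\big\|\sum_t\E[X_t^2]\big\|_2\le\|A\|_F^2/r$ yields the right asymptotic $r=\Theta(\|A\|_F^2\epsilon^{-2}\log(d/\delta))$, but with the ambient dimension inside the logarithm (it should be $L$, not $N$, since $AA^T\in\R^{L\times L}$) and with $\delta$ rather than $\sqrt{\delta}$. You are right that the stated form --- $\|A\|_F^2$ inside the $\log$ in place of a dimension, and the $\sqrt{\delta}$ --- is the signature of the Rudelson/Oliveira route: bound $\E\|Y-AA^T\|_2$ via noncommutative Khintchine (this is exactly the \cite{Oli10} ingredient the paper points to) and then apply Markov to the squared norm. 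Two small caveats: your claim that centering ``only tightens'' the uniform bound is not quite right, since $\|X_t-\E[X_t]\|_2$ can pick up an additive $\|AA^T\|_2/r\le 1/r$, which is where the hypothesis $\|A\|_F^2\ge 1/24$ begins to matter; and the $2N$ in your Bernstein display should be the ambient dimension $L$.
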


Below, we provide the pseudocode of the $\crmm$ algorithm.

\begin{algorithm}[h]
\label{CR_mult}
\SetAlgoLined
 \KwIn{Matrices $A\in\R^{L\times N}$ and $B\in\R^{N\times M}$}
 \KwOut{Approximate product $Y\approx AB$}
 \textbf{Determine}: Distribution $\{p_i\}_{i=1}^N$, according to \eqref{sampl_pro}\\
 \textbf{Initialize}: $Y=\bold{0}_{L\times M}$\\
 \For{i $\gets$ 1 to r}
   {
     sample $j\in \N_N$ with replacement, according to $\{p_i\}_{i=1}^N$\\
     $Y\gets Y + \frac{1}{rp_j}\cdot A^{(j)}B_{(j)}$
   }
 \caption{$CR$ matrix multiplication}
\end{algorithm}

\section{Spectral Sparsification}
\label{sp_spars_sec}

First, recall that an \textbf{\textit{$\varepsilon$-spectral sparsifier}} for $\varepsilon\in(0,1)$ of $G$ with Laplacian $\Lb$, is a sketched graph $\Gt$ whose Laplacian $\Lbt$ satisfies
\begin{equation}
\label{sp_spars_id}
(1-\varepsilon)\xb^T\Lbt\xb \leqslant \xb^T\Lb\xb \leqslant (1+\varepsilon)\xb^T\Lbt\xb \quad \iff \quad (1-\varepsilon)\|\Bbt\xb\|_2^2 \leqslant \|\Bb\xb\|_2^2 \leqslant (1+\varepsilon)\|\Bbt\xb\|_2^2
\end{equation}
for all $\xb\in\R^n$. This implies that the approximated graph $\Gt$ preserves the total weight of any cut between the factors of $1\rpm\varepsilon$, hence also allowing a good approximation to its max-flow. A natural definition to consider, is that of when the approximation error is \textit{additive}.

\begin{Def}
\label{add_sp_def}
An \textbf{additive $\varepsilon$-sparsifier} of $G$ with Laplacian $\Lb$, is a sketched graph $\Gt$ whose Laplacian $\Lbt$ satisfies

$$ \xb^T\big(\Lbt-\varepsilon\cdot\Ib_n\big)\xb \leqslant \xb^T\Lb\xb \leqslant \xb^T\big(\Lbt+\varepsilon\cdot\Ib_n\big)\xb \quad \iff \quad \left|\xb^T(\Lb-\Lbt)\xb\right| \leqslant \varepsilon\cdot\|\xb\|_2^2\ , $$
for all $\xb\in\R^n$.
\end{Def}

We distinguish between the two types of sparsifiers, by referring to those satisfying \eqref{sp_spars_id} as \textit{\textbf{multiplicative}}. It is worth pointing out that row/column sampling algorithms whose approximations are in terms of the Frobenius norm; \textit{e.g.} \eqref{CR_mult_error}, naturally yield additive sparsifiers, while those which are in terms of the Euclidean norm; \textit{e.g.} \eqref{bd_AAt}, admit multiplicative sparsifiers.

\subsection{Spectral Sparsifier from $\crmm$}

We propose approximating $\Lb$ by using the $\crmm$ algorithm on $\Bb^T\Bb$. Let $W=\|\Bb\|_F^2/2=\sum_{e'\in E}w_{e'}$. The resulting sampling probability of $e\in E$ according to \eqref{sampl_pro}, is
\begin{equation}
\label{sampl_pro_w}
  p_e\propto\|\Bb_{(e)}\|_2^2 = 2w_e \quad \implies \quad p_e=w_e/W\ .
\end{equation}
Thus, we are sampling edges proportionally to their weights. The resulting procedure is presented in Algorithm \ref{CR_spars}, where at each iteration we have a rank-1 update. We carry out a total of $r$ sampling trials and rescale the updates, to reduce the variance of the estimator. Moreover, for $\Pib=\text{diag}(w_e/W)$, let $x_e=\sqrt{\Pib}\chi_e$. Then $p_e=\|x_e\|_2^2$.

In simple words, we carry out $r$ sampling trials with replacement on $E$, and each time $e'$ is sampled, its new weight is increased by $\frac{W}{r}$. Furthermore, we note that the sampling procedure results in a diagonal sketching matrix $\Sb$, where $\Sb_{e,e}=\frac{\#\ e\ {\text is} \ {\text sampled}}{rp_e}$. Hence $\Lbt=\Bb^T\Sb\Bb$ and $\Bbt=\sqrt{\Sb}\Bb$.

\begin{algorithm}[h]
\SetAlgoLined
 \KwIn{A weighted simple undirected graph $G=(V,E,w)$, number of sampling trials $r$}
 \KwOut{Laplacian $\Lbt$, of sparsified $\Gt=(V,\Et,\wt)$}
 \textbf{Determine}: Boundary matrix $\Bb\in\R^{E\times V}$ of $G$, distribution $\{p_e=w_e/W\}_{e\in E}$\\
 \textbf{Initialize}: $\Lbt=\bold{0}_{V\times V}$\\
 \For{i $\gets$ 1 to r}
   {
     sample w.r. $e'\in E$, according to $\{p_e\}_{e\in E}$\\
     $\Lbt\gets\Lbt + \frac{W}{rw_{e'}}\cdot\chit_{e'}\chit_{e'}^T = \Lbt + \frac{W}{r}\cdot\chi_{e'}\chi_{e'}^T$
   }
 \caption{$CR$ spectral sparsifier}
\label{CR_spars}
\end{algorithm}

\begin{Prop}
\label{prop_add_ss}
Given a weighted simple undirected graph $G=(V,E,w)$, Algorithm \ref{CR_spars} produces an additive $\varepsilon$-spectral sparsifier of minimum variance; for $\varepsilon=2W\epsilon$ and $\epsilon$ the $CR$ accuracy parameter, with probability $1-\delta$ and $r\geqslant\frac{1}{\delta^2\epsilon^2}$.
\end{Prop}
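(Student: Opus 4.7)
The plan is to recognize that Algorithm~\ref{CR_spars} is exactly $\crmm$ applied to the product $\Bb^T\Bb$, so Theorem~\ref{thm_CR} can be invoked essentially verbatim. Specifically, the algorithm's sampling distribution $p_e = w_e/W$ coincides with the CR optimal distribution \eqref{sampl_pro} applied to the pair $(\Bb^T,\Bb)$, because $\|(\Bb^T)^{(e)}\|_2\cdot\|\Bb_{(e)}\|_2 = \|\Bb_{(e)}\|_2^2 = 2w_e$ after the normalization $\sum_{e'}2w_{e'} = 2W$. The update rule in Algorithm~\ref{CR_spars} also matches \eqref{CR_eq} because $\chit_{e'}\chit_{e'}^T = w_{e'}\chi_{e'}\chi_{e'}^T$ and the rescaling $1/(rp_{e'})$ simplifies to $W/(rw_{e'})$. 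Consequently, $\Lbt$ is the CR estimator of $\Lb=\Bb^T\Bb$, which by Theorem~\ref{thm_CR} is unbiased and of minimum variance among all distributions normalized on $E$.

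For the additive sparsifier guarantee, I would apply the error bound \eqref{CR_mult_error} of Theorem~\ref{thm_CR} with $A=\Bb^T$ and $B=\Bb$. Since $\|\Bb\|_F^2 = \sum_{e\in E}\|\Bb_{(e)}\|_2^2 = \sum_{e\in E}2w_e = 2W$, we have $\|\Bb^T\|_F\cdot\|\Bb\|_F = 2W$, so with probability at least $1-\delta$, provided $r\geqslant 1/(\delta^2\epsilon^2)$,
\begin{equation*}
\|\Lb-\Lbt\|_F \;\leqslant\; \epsilon\cdot\|\Bb^T\|_F\|\Bb\|_F \;=\; 2W\epsilon.
\end{equation*}

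The remaining step is to convert this Frobenius-norm bound into the quadratic-form inequality of Definition~\ref{add_sp_def}. For any symmetric matrix $\Mb$ and any $\xb\in\R^n$,
\begin{equation*}
|\xb^T\Mb\xb| \;\leqslant\; \|\Mb\|_2\cdot\|\xb\|_2^2 \;\leqslant\; \|\Mb\|_F\cdot\|\xb\|_2^2,
\end{equation*}
which applied to $\Mb=\Lb-\Lbt$ yields $|\xb^T(\Lb-\Lbt)\xb| \leqslant 2W\epsilon\cdot\|\xb\|_2^2 = \varepsilon\cdot\|\xb\|_2^2$ for every $\xb\in\R^n$, on the same high-probability event. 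This is precisely the additive $\varepsilon$-sparsifier condition of Definition~\ref{add_sp_def}.

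There is no real obstacle here; the argument is essentially a direct translation of Theorem~\ref{thm_CR} through the identification $\Lb=\Bb^T\Bb$ and the scalar $\|\Bb\|_F^2 = 2W$. The only subtle point to mention explicitly is the passage from Frobenius-norm approximation to quadratic-form approximation via $\|\cdot\|_2 \leqslant \|\cdot\|_F$, which is why the CR Frobenius guarantee naturally produces \emph{additive} rather than multiplicative sparsifiers, as already highlighted in the discussion following Definition~\ref{add_sp_def}.
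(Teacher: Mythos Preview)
Your proof is correct and follows essentially the same route as the paper: invoke the $\crmm$ Frobenius bound \eqref{CR_mult_error} to get $\|\Lb-\Lbt\|_F\leqslant 2W\epsilon$, then pass to the quadratic-form inequality of Definition~\ref{add_sp_def}, with minimum variance coming directly from Theorem~\ref{thm_CR}. Your conversion step via $|\xb^T\Mb\xb|\leqslant\|\Mb\|_2\|\xb\|_2^2\leqslant\|\Mb\|_F\|\xb\|_2^2$ is in fact cleaner than the paper's argument, which splits into the cases $\Lb-\Lbt\succeq0$ and $\Lb-\Lbt\preceq0$ even though neither need hold.
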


\begin{proof}
From \eqref{CR_mult_error}, for $\Delta\coloneqq \Lb-\Lbt\succeq 0$ we have w.h.p. $\|\Delta\|_F = \|\Lb-\Lbt\|_F\leqslant\epsilon\|\Bb\|_F^2=2W\epsilon$, and in turn:
\begin{align*}
  \xb^T(\Lb-\Lbt)\xb &\overset{\sharp}{=} \xb^T\Delta\xb \\
  &= \|\xb^T\Delta\xb\|_F \\
  &\leqslant \|\Delta\|_F\cdot\|\xb\|_2^2 \\
  &= \big(2W\epsilon\big)\cdot\|\xb\|_2^2 ,
\end{align*}
which implies that
$$\xb^T\Lb\xb \leqslant \xb^T\left(\Lbt+\Ib_n\cdot\big(2W\epsilon\big)\right)\xb .$$
In the case where $\Delta\preceq0$, continuing from $\sharp$ we have
$$ -\xb^T\Delta\xb = \|\xb^T\Delta\xb\|_F \leqslant  \big(2W\epsilon\big)\cdot\|\xb\|_2^2 \quad \implies \quad \xb^T\left(\Lbt-\Ib_n\cdot\big(2W\epsilon\big)\right)\xb \leqslant \xb^T\Lb\xb. $$
All in all we have
\begin{equation}
\label{add_spars_PSD}
 \xb^T\left(\Lbt-\Ib_n\big(2W\epsilon\big)\right)\xb \leqslant \xb^T\Lb\xb \leqslant \xb^T\left(\Lbt+\Ib_n\big(2W\epsilon\big)\right)\xb \quad
  \iff \quad \left|\xb^T(\Lb-\Lbt)\xb\right| \leqslant \big(2W\epsilon\big)\cdot\|\xb\|_2^2\ ,
\end{equation}
which is an additive $\varepsilon$-spectral sparsifier; for $\varepsilon=\big(2W\epsilon\big)$.

By Theorem \ref{thm_CR}, the resulting estimator is of minimum variance. If $r\geqslant \frac{1}{\delta^2\epsilon^2}$ sampling trials are carried out, by \eqref{CR_mult_error} it follows that we attain such a sparsifier with probability at least $1-\delta$.
\end{proof}

\subsection{Multiplicative Spectral Sparsifier}
\label{mult_sp_spars_sub}

The case where $A=B^T$ in the $\crmm$ algorithm has also been studied as a special case, as it appears in numerous applications. This restriction allows us to use statements from random matrix theory \cite{Oli10}, to get stronger spectral norm bounds, \textit{e.g.} Theorem \ref{sp_bd_thm} \cite[Theorem 4]{DMMS11}, \cite[Theorem 8]{Mah16}.

We will use Theorem \ref{sp_bd_thm} to show that Algorithm \ref{CR_spars} is also a multiplicative spectral sparsifier. First, we recall an equivalent definition of a multiplicative $\varepsilon$-spectral sparsifier, based on spectral norm.

\begin{Def}
\label{def_SS_sp_norm}
For a weighted graph with Laplacian $\Lb$ and $\varepsilon>0$, a sketched graph $\Gt$ of $G$ with Laplacian $\Lbt$ and \textbf{isotropic boundary matrix} $\Bbti\coloneqq\Bbt\Lb^{-1/2}$ for $\Lb^{-1/2}\coloneqq\sqrt{\Lb^{\dagger}}$, is a \textbf{multiplicative $\varepsilon$-spectral sparsifier} if
\begin{equation}
\label{isot_cond}  
  \|\Ib_n-\Bbti^T\Bbti\|_2 = \|\Lb^{-T/2}(\Lb-\Lbt)\Lb^{-1/2}\|_2 \leqslant\varepsilon\ .
\end{equation}
\end{Def}

\begin{Prop}
\label{prop_mult_ss}
Let $G=(V,E,w)$ be a weighted simple undirected graph with $W=\sum_{e'\in E}w_{e'}\geqslant\sigma_{\max}^2(\Bb)/48$, and $\epsilon\in(0,1)$ an accuracy parameter.\footnote{$\lambda_{\max}(\Lb)=\sigma_{\max}(\Lb)=\sigma_{\max}^2(\Bb)$} Algorithm \ref{CR_spars} produces a multiplicative $\varepsilon$-spectral sparsifier $\Gt$ for $\varepsilon=\kappa_2(\Lb)\cdot\epsilon$ with high probability, for $r$ sufficiently large.\footnote{The \textit{\textbf{condition number}} of $\Lb$ is denoted by $\kappa_2(\Lb)=\|\Lb\|_2\|\Lb^{\dagger}\|_2=\sigma_{\max}(\Lb)/\sigma_{\min}(\Lb)$ \cite{Spi10,Vis13,Kes16}. Since the smallest singular of $\Lb$ for $G$ connected is 0, by $\sigma_{\min}(\Lb)$ we denote the second smallest singular, which is equal to $1/\|\Lb^{\dagger}\|_2$. Also note that $\Lb^{-T/2}=\Lb^{-1/2}$.}
\end{Prop}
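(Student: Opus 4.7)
The strategy is to apply Theorem \ref{sp_bd_thm} to a suitably normalized version of $\Bb^T$, convert the resulting spectral-norm error bound on $\Lb - \Lbt$ into the isotropic form of Definition \ref{def_SS_sp_norm}, and pay a factor of $\kappa_2(\Lb)$ in the process. Since Theorem \ref{sp_bd_thm} requires the input matrix to have spectral norm at most $1$, I would set $A \coloneqq \Bb^T/\sigma_{\max}(\Bb)$, so that $\|A\|_2 = 1$ and $AA^T = \Lb/\sigma_{\max}^2(\Bb)$. The sampling distribution in \eqref{sampl_pro} is invariant under the global rescaling $\Bb^T\mapsto \Bb^T/\sigma_{\max}(\Bb)$, and the $\crmm$ estimator scales quadratically in this factor, so the output $Y$ of Algorithm \ref{CR_mult} applied to $A$ is exactly $\Lbt/\sigma_{\max}^2(\Bb)$, with $\Lbt$ as produced by Algorithm \ref{CR_spars}.

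Next, I would verify the Frobenius-norm hypothesis of Theorem \ref{sp_bd_thm}: $\|A\|_F^2 = \|\Bb\|_F^2/\sigma_{\max}^2(\Bb) = 2W/\sigma_{\max}^2(\Bb)$, which is at least $1/24$ precisely because of the standing assumption $W \geqslant \sigma_{\max}^2(\Bb)/48$. Choosing $r$ large enough to satisfy the sample-complexity bound of Theorem \ref{sp_bd_thm}, one obtains that, with probability at least $1-\delta$,
\begin{equation*}
 \big\|\Lb - \Lbt\big\|_2 \;=\; \sigma_{\max}^2(\Bb)\cdot\big\|AA^T - Y\big\|_2 \;\leqslant\; \epsilon\cdot\sigma_{\max}^2(\Bb) \;=\; \epsilon\cdot\|\Lb\|_2\ .
\end{equation*}
Then the final step is the conversion to \eqref{isot_cond}: using submultiplicativity of the spectral norm and $\|\Lb^{-1/2}\|_2^2 = \|\Lb^{\dagger}\|_2$,
\begin{equation*}
 \big\|\Lb^{-T/2}(\Lb-\Lbt)\Lb^{-1/2}\big\|_2 \;\leqslant\; \|\Lb^{\dagger}\|_2\cdot\|\Lb-\Lbt\|_2 \;\leqslant\; \|\Lb^{\dagger}\|_2\cdot\|\Lb\|_2\cdot\epsilon \;=\; \kappa_2(\Lb)\cdot\epsilon\ ,
\end{equation*}
which yields the claimed multiplicative sparsification bound with $\varepsilon = \kappa_2(\Lb)\cdot\epsilon$.

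The main subtlety I anticipate is the rank deficiency of $\Lb$ for a connected $G$: $\Lb$ has the all-ones vector in its kernel, so $\Lb^{-1/2}$ should be read as $\sqrt{\Lb^{\dagger}}$, and the identity $\Lb^{-1/2}\Lb\Lb^{-1/2} = \Ib_n$ only holds on the image of $\Lb$. The spectral-norm chain above is unaffected by this, since both $\Lb-\Lbt$ and the pseudoinverse act trivially on $\ker(\Lb) = \mathrm{span}\{\mathbf{1}\}$, so all matrices involved can be restricted to that image where the bound becomes a genuine statement about $\Ib_{n-1}-\Bbti^T\Bbti$; I would note this explicitly but otherwise the argument is immediate. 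Verifying that the sampling distribution and estimator are scale-invariant (so that normalization does not alter Algorithm \ref{CR_spars}) is a one-line check from \eqref{sampl_pro} and \eqref{CR_eq}.
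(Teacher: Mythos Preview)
Your proposal is correct and follows essentially the same route as the paper: normalize $\Bb$ by $\sigma_{\max}(\Bb)$, verify the hypotheses of Theorem~\ref{sp_bd_thm}, obtain $\|\Lb-\Lbt\|_2\leqslant\epsilon\,\sigma_{\max}(\Lb)$ with high probability, and then conjugate by $\Lb^{-1/2}$ to reach the isotropic bound \eqref{isot_cond}, absorbing a factor $\|\Lb^{\dagger}\|_2$ and hence $\kappa_2(\Lb)$. Your explicit remarks on the scale-invariance of the sampling distribution and on the pseudoinverse/kernel issue are points the paper leaves implicit, but they do not change the argument.
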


\begin{proof}
Denote the sketch of Algorithm \ref{CR_spars} by $\Bb^T\Bb\approx\Bbt^T\Bbt$, and define $\Bbbar\coloneqq\Bb/\sigma_{\max}(\Bb)$; $\Bbh\coloneqq\Bbt/\sigma_{\max}(\Bb)$. Let $\Bbbar^T\gets A$ in Theorem \ref{sp_bd_thm}, thus $\Bbbar^T\Bbbar\approx\Bbh^T\Bbh$. The first condition of Theorem \ref{sp_bd_thm} is met, as $\|\Bbbar\|_2=\|\Bb\|_2/\sigma_{\max}(\Bb)=1$. Since $\|\Bbbar\|_F^2=\|\Bb\|_F^2/\sigma_{\max}^2(\Bb)=2W/\sigma_{\max}^2(\Bb)$, by our assumption on $W$ it follows that $\|\Bbbar\|_F^2=\frac{2W}{\sigma_{\max}^2(\Bb)}\geqslant\frac{2\sigma_{\max}^2(\Bb)}{48\sigma_{\max}^2(\Bb)}=1/24$. Hence, the condition $\|\Bbbar\|_F^2\geqslant1/24$ is also met.

Let $\theta=\sigma_{\max}(\Lb)\epsilon=\sigma_{\max}^2(\Bb)\epsilon$. From \eqref{bd_AAt} it follows that:
\begin{align*}
  \Pr\left[\|\Lb-\Lbt\|_2\leqslant\sigma_{\max}(\Lb)\epsilon\right] &= \Pr\left[\frac{\|\Bb^T\Bb-\Bbt^T\Bbt\|_2}{\sigma_{\max}^2(\Bb)}\leqslant\epsilon\right]\\
  &= \Pr\left[\|\Bbbar^T\Bbbar-\Bbh^T\Bbh\|_2\leqslant\epsilon\right]\\
  &\leqslant 1-\delta\ .
\end{align*}
We now appropriately apply $\Lb^{-1/2}$, in order to invoke \eqref{isot_cond}:
\begin{align*}
  \|\Ib_n-\Bbti^T\Bbti\|_2 &= \|\Ib_n-\Lb^{-1/2}\cdot(\Bbt^T\Bbt)\cdot\Lb^{-1/2}\|_2\\
  &= \|\Ib_n-\Lb^{-1/2}\cdot\Lbt\cdot\Lb^{-1/2}\|_2\\
  &= \|\Lb^{-1/2}(\Lb-\Lbt)\Lb^{-1/2}\|_2\\
  &\leqslant \theta\cdot\|\Lb^{-1/2}\|_2^2\\
  &= \frac{\theta}{\sigma_{\min}(\Lb)}\\
  &= \kappa_2(\Lb)\cdot\epsilon\ .
\end{align*}
Therefore
$$ \Pr\left[\|\Ib_n-\Bbti^T\Bbti\|_2\leqslant \kappa_2(\Lb)\cdot\epsilon\right] \geqslant 1-\delta $$
for $r\geqslant 6\gamma_{\epsilon,\Bb}^2\ln\left(\gamma_{\epsilon,\Bb}^2/\sqrt{\delta}\right)$,
where $\gamma_{\epsilon,\Bb}=\frac{8W}{\epsilon\cdot\sigma_{\max}(\Bb)}$ and $\delta\in(0,1]$. This completes the proof.
\end{proof}

We note that since the objective here is to sparsify the graph, and since we do so by s.w.r., the condition $r\leqslant N$ assumed in Theorem \ref{sp_bd_thm} can be violated, as we will get heavier resulting edges for unstructured graphs, rather than more edges. All guarantees will still hold true.

\subsection{Comparison to the Effective Resistances Approach}

Let $\xt_e=\Lb^{-1/2}\chi_e$, for each $e\in E$. Then, the effective resistances are defined as $r_e=\|\xt_e\|_2^2$. It is therefore clear that the only difference between the proposed algorithm and that of sparsifying through effective resistances, is that the former is rescaled according to $\Pib$ rather than $\Lb^{\dagger}$. The main benefit in our approach, is that the sampling distribution can be determined directly through $w$. We note also that $\{r_e\}_{e\in E}$ can be \textit{approximated} in nearly-linear time \cite{SS11}.

The analysis of the proposed random sampling algorithm invokes Theorem \ref{sp_bd_thm}, whose proof relies on a Chernoff bound on sums of Hermitian matrices \cite{Oli10}. Use of this bound is new in random sampling for Laplacian sparsification, and specifically applies to our proposed spectral method using sampling with replacement. This is to be compared with the use of other conventional Chernoff bounds \cite{Tro12} and concentration of measure \cite{Rud99} approaches. Intriguingly, unlike \cite{SS11}; our approach does not require spectral information of $\Lb$.

The benefit of using the bound of \cite{Oli10}, is that it can be applied to directly approximate the \textit{intersection} of two different graphs on $V$. Specifically, we use $\crmm$ to approximate $\Lb_{1,2}=\Bb_1^T\Bb_2$, for $\Bb_1,\Bb_2$ the boundary matrices of the two graphs. A spectral guarantee, is not available, to our knowledge, for the case where the error of the underlying approximate MM is in terms of the Euclidean norm. Therefore, the techniques of \cite{SS11} on sampling according to effective resistances does not apply. Definition \ref{add_sp_def} on the other hand quantifies the approximation error we get for Laplacians of such intersection graphs.

\section{Experiment}

We compared s.w.r. according to $\{p_e\}_{e\in E}$ (via $\crmm$) which is already known through $w$, and $\{r_e\}_{e\in E}$ (ER); which requires $O(mn^2)$ operations to calculate. Even though our main benefit is algorithmic, empirically our approach performs just as well; in terms of the error characterization \eqref{isot_cond}. We considered the barbell graph on $n=2713$ vertices, and assigned weights to each of the $m=7864$ edges randomly from $\N_{100}$. We sparsified the graph for $r=3500+500\nu$; for each $\nu\in\N_{13}$. In Figure \ref{adj_fig} we present the adjacency matrices of $G$ and $\Gt$, to distinguish the difference of $G$ and $\Gt$ for $r=4000$. In Figures \ref{prec_fig},\ref{error_fig_char}, we show the sparsification rate and error for each $r$.

\begin{figure}[h]
  \centering
    \includegraphics[scale=.35]{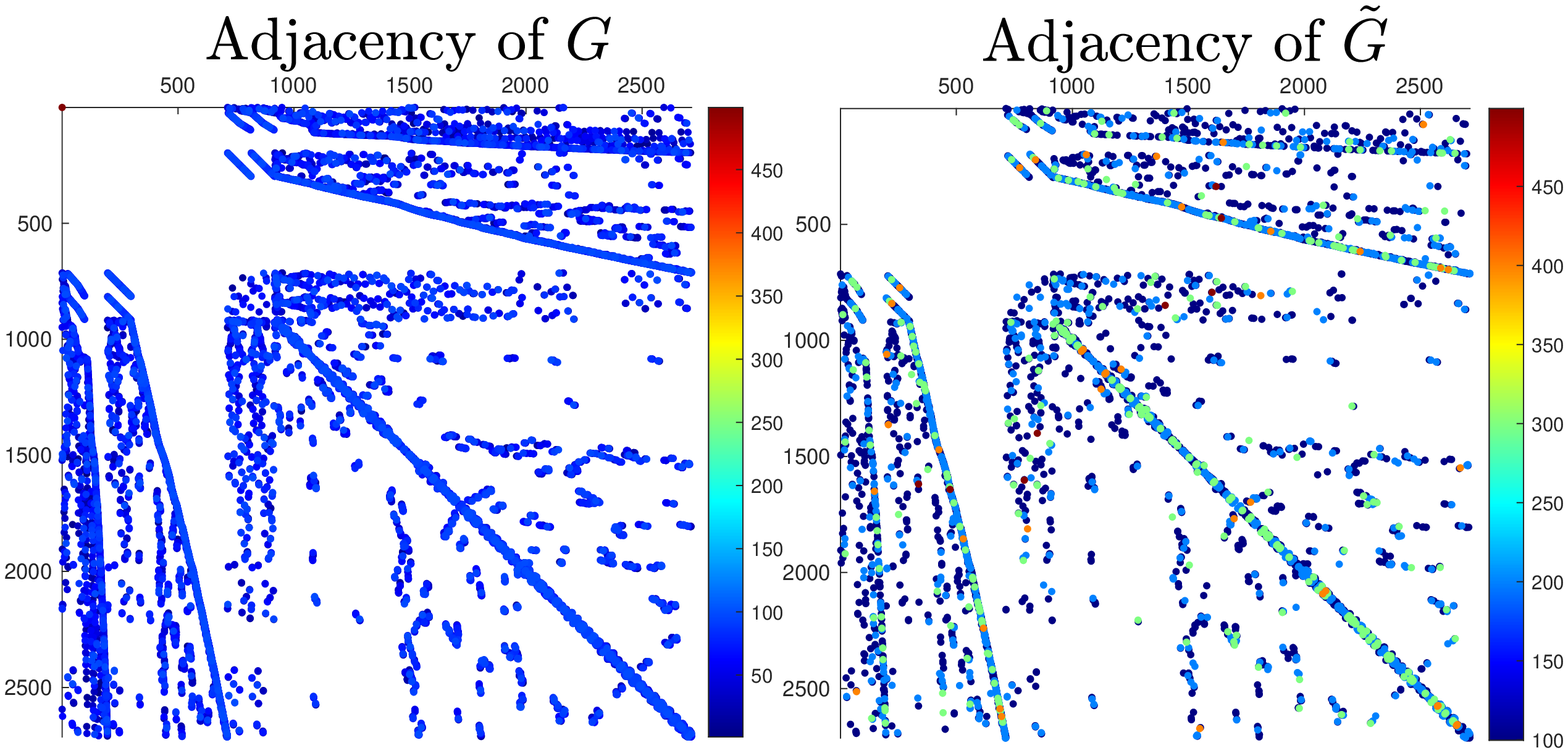}
    \caption{Adjacency matrices of $G$ and $\Gt$, for $r=4000$.}
  \label{adj_fig}
\end{figure}

\begin{figure}[h]
    \centering
    \begin{minipage}{0.5\textwidth}
        \centering
        \includegraphics[scale=.32]{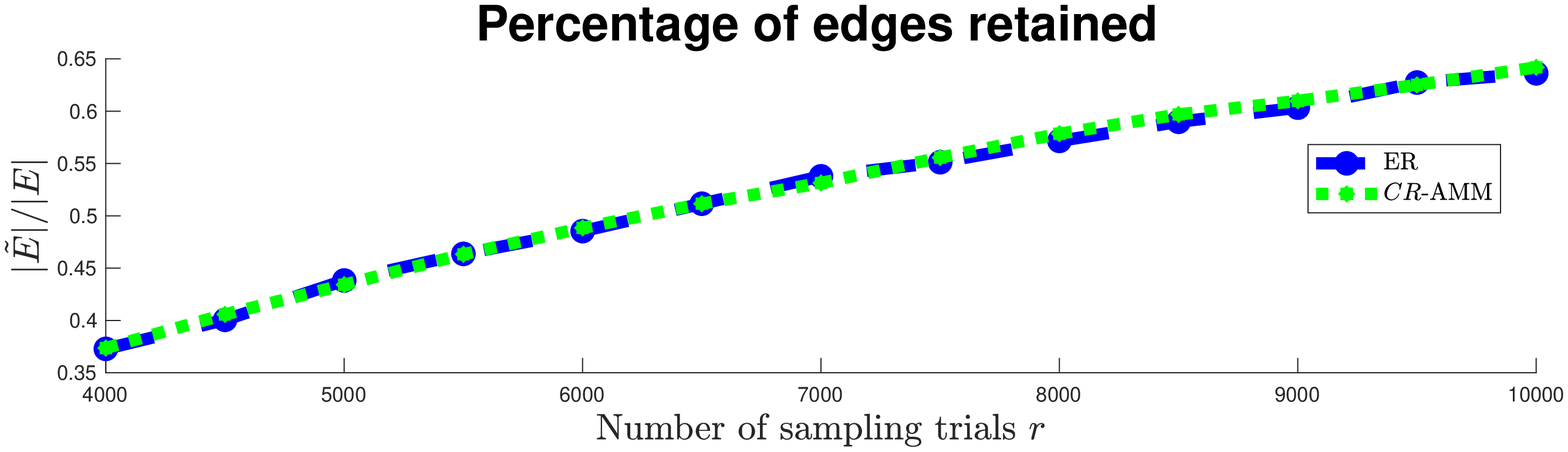}
        \caption{Percentage of retained edges, after sparsification.}
        \label{prec_fig}
    \end{minipage}\hfill
    \begin{minipage}{0.5\textwidth}
        \centering
        \includegraphics[scale=.32]{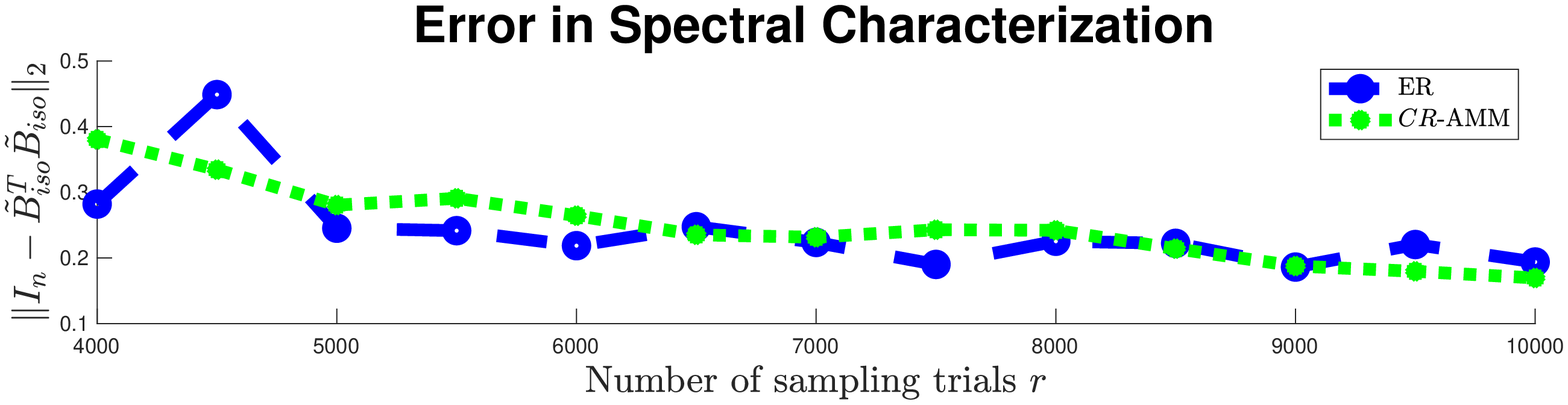}
        \caption{Error in terms of \eqref{isot_cond}, for varying $r$.}
  \label{error_fig_char}

    \end{minipage}
\end{figure}

\section{Future Directions}

In this paper, we proposed a graph sparsifier that approximates Laplacian through the use of $\crmm$; a sampling with replacement technique, adapted from RandNLA.

Applications of the proposed method to spectral clustering through \textit{block} sampling \cite{CPH20c,NL21} would be worthwhile future work. Specifically, cliques of a given graph may be determined by approximating their Laplacians. The proposed computationally efficient spectral approximation may permit the identification of highly connected vertices without the need to traverse through the entire graph.




\balance
\bibliographystyle{unsrt}
\bibliography{refs.bib}

\end{document}